\numberwithin{equation}{section}
\newcommand{\A}{\mathrm{A}}
\newcommand{\C}{\mathrm{C}}
\newcommand{\Br}{\mathrm{Br}}
\newcommand{\Pic}{\mathrm{Pic}}
\newcommand{\FF}{\mathbb F}
\newcommand{\QQ}{\mathbb Q}
\newcommand{\RR}{\mathbb R}
\newcommand{\ZZ}{\mathbb Z}
\newtheorem{proposition}{Proposition}[section]
\newtheorem{theorem}[proposition]{Theorem}
\newtheorem{corollary}[proposition]{Corollary}
\theoremstyle{definition}
\title{\bf The Brauer group of Burnside rings}
\author{Markus Szymik}
\date{March 2010}
\begin{document}

\maketitle

\begin{abstract}
\noindent
The Brauer group of a commutative ring is an important invariant of a commutative ring, a common journeyman to the group of units and the Picard group. Burnside rings of finite groups play an important r\^{o}le in representation theory, and their groups of units and Picard groups have been studied extensively. In this short note, we completely determine the Brauer groups of Burnside rings: they vanish.
\end{abstract}

\thispagestyle{empty}


\section*{Introduction}

Let~$G$ be a finite group. Disjoint union and cartesian product yield an addition and a multiplication on the set of isomorphism classes~$[S]$ of finite~$G$-sets~$S$. Except for the lack of additive inverses, these satisfy the axioms of a commutative ring. Formally adjoining additive inverses, one obtains the corresponding Grothendieck ring, the Burnside ring~$\A(G)$ of~$G$. See~\cite{Solomon} for the original source, and~\cite{Bouc:survey} for a recent comprehensive survey. Burnside rings, their modules, and algebras play an important r\^{o}le in representation theory and transformation groups. Consequently, many different algebraic properties of these rings have been studied, such as for example its idempotents, the prime ideal spectrum, and its representation type. See~\cite{Dress} and~\cite{Reichenbach} for these.

This short note determines another important invariant of the Burnside ring as a commutative ring: its Brauer group. This is a natural successor in the sequence of~K-theoretic invariants beginning with the group of units and the Picard group, which have been studied for Burnside rings extensively. We refer to \cite{Bass} for the general machinery of algebraic K-theory, but the few things used here will be recalled as needed. Thus, Section~\ref{sec:units} and Section~\ref{sec:Picard} are devoted to the units and the Picard group of commutative rings in general and of Burnside rings in particular. The Brauer group for commutative rings in general is recalled in Section~\ref{sec:Brauer}, while the final Section~\ref{sec:thm} contains the main result and its proof. The following Section~\ref{sec:ghost} reviews the most important means to study the Burnside ring of a finite group: the ghost map.


\section{The ghost map}\label{sec:ghost}

Arguably the most important means to study the Burnside ring~$\A(G)$ of a finite group~$G$ is its ghost ring~$\C(G)$. See~\cite{tomDieck:LNM} or \cite{Bouc:survey} for proofs of the results collected here. The ghost ring is the ring of integer valued functions on the set of conjugacy classes~$[H]$ of subgroups~$H$ of~$G$. For example, if~$S$ is finite~$G$-set, then~\hbox{$\Phi_S\colon[H]\mapsto|S^H|$}, the number of~$H$-fixed points of~$X$, is such a function. This induces a ring homomorphism
\begin{equation}\label{eq:ghost}
	\Phi\colon\A(G)\longrightarrow\C(G),\,S\longmapsto\Phi_S.
\end{equation}
The following observation goes back to Burnside, and is the reason for naming the rings~$\A(G)$ in his honor. 

\begin{proposition}
	The ghost map~\eqref{eq:ghost} is injective.
\end{proposition}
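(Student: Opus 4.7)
The plan is to exploit the fact that both $\A(G)$ and $\C(G)$ are free abelian groups of the same finite rank, namely the number~$n$ of conjugacy classes~$[H]$ of subgroups of~$G$, and to show that in suitable bases the map~$\Phi$ is represented by a triangular integer matrix with nonzero diagonal. Concretely, $\A(G)$ is free on the classes of the transitive $G$-sets~$[G/H]$, one for each conjugacy class~$[H]$, while $\C(G)$ is free on the characteristic functions~$\delta_{[H]}$ of the conjugacy classes.

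First I would record the elementary fixed-point count: for subgroups~$H$ and~$K$ of~$G$,
\begin{equation*}
	|(G/H)^K| = |\{\,gH\in G/H\mid g^{-1}Kg\subseteq H\,\}|,
\end{equation*}
so that $\Phi_{G/H}([K])$ vanishes unless~$K$ is subconjugate to~$H$, and
\begin{equation*}
	|(G/H)^H| = |N_G(H)/H| \geq 1
\end{equation*}
is strictly positive. Next I would choose a numbering~$[H_1],\dots,[H_n]$ of the conjugacy classes that refines the subconjugacy partial order in the reverse sense, i.e.\ such that~$H_i$ subconjugate to~$H_j$ forces~$i\geq j$ (for instance, list the classes in non-increasing order of~$|H|$, breaking ties arbitrarily).

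With respect to the bases~$[G/H_1],\dots,[G/H_n]$ and~$\delta_{[H_1]},\dots,\delta_{[H_n]}$, the matrix of~$\Phi$ then has entries~$|(G/H_j)^{H_i}|$, which vanish whenever~$i<j$ and are positive when~$i=j$. Hence the matrix is lower triangular with nonzero diagonal entries, so its determinant~$\prod_{j}|N_G(H_j)/H_j|$ is a nonzero integer. Consequently~$\Phi$ becomes an isomorphism after tensoring with~$\QQ$, and in particular the integral map~$\Phi$ is injective.

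The only real content is the triangularity observation together with the diagonal computation; I do not anticipate an obstacle beyond fixing a linear extension of subconjugacy and unwinding the definition of~$\Phi$. The proof also yields the sharper statement that~$\Phi$ has finite cokernel of order~$\prod_j|N_G(H_j)/H_j|$, which is not needed for the injectivity assertion but is useful in the sequel.
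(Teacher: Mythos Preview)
Your argument is correct and is exactly the classical ``table of marks'' proof due to Burnside: ordering the conjugacy classes by a linear extension of subconjugacy makes the matrix of~$\Phi$ lower triangular with diagonal entries~$|N_G(H_j)/H_j|>0$, so~$\Phi$ has nonzero determinant and is injective. The paper itself does not supply a proof of this proposition; it merely records the result and refers the reader to the standard sources~\cite{tomDieck:LNM} and~\cite{Bouc:survey}, where one finds precisely the argument you have written down.
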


As the source and target of the ghost map have the same rank over~$\ZZ$, the cokernel is finite. In particular, there is an integer~$n$ such that 
\begin{equation}\label{eq:n}
	n\C(G)\subset\Phi(\A(G))\subset\C(G).
\end{equation}
In fact, we may take~$n=|G|$, the order of~$G$, and this shall be our choice from now on. We will also identify the Burnside ring~$\A(G)$ with its image~$\Phi(\A(G))$ in the ghost ring~$\C(G)$.


\section{Groups of units}\label{sec:units}

If~$A$ is a commutative ring, the group~$A^\times$ of its units is an abelian group under multiplication. For an obvious example, we have~$\ZZ^\times=\{\pm1\}$ by inspection. 

The groups of units~$\A(G)^\times$ of Burnside rings~$\A(G)$ are not yet fully understood. As~$\A(G)^\times$ is contained in~$\C(G)^\times$, and the units of the ghost ring are the functions with value~$\pm1$, it is clear that the group of units is an elementary abelian 2-group. Yoshida characterized which units of~$\C(G)$ lie in the Burnside ring, see~\cite{Yoshida}. Earlier, Matsuda has shown that, if~$G$ is abelian, the rank of the group of units equals~$1+s$, where~$s$ is number of subgroups of index 2, see~\cite{Matsuda}.

In~\cite{tomDieck:LNM}, tom Dieck has shown that if the order of~$G$ is odd, there are only trivial units~\hbox{$\A(G)^\times=\{\pm1\}$}. In fact, this result is shown to be equivalent to the Feit-Thompson theorem which states that every group of odd order is solvable. He also showed that~\hbox{$\A(G)^\times\not=\{\pm1\}$} if~$G$ is not solvable. 

For non-abelian, solvable groups of even order, the groups of units of the Burnside rings are still not completely understood. See for example \cite{Matsuda+Miyata}, \cite{Alawode}, \cite{Yalcin}, and~\cite{Bouc:units} for progress in this matter.


\section{Picard groups}\label{sec:Picard}

If~$A$ is a commutative ring, an~$A$-module~$M$ is invertible if there is another~$A$-module~$N$ such that~$M\otimes_AN\cong A$. The Picard group~$\Pic(A)$ is the set of isomorphisms classes~$[M]$ of invertible~$A$-modules~$M$ under tensor product. For example, we have~$\Pic(\ZZ)=0$ by the classification of finitely generated abelian groups. For later use we will record the following result.

\begin{proposition}\label{prop:finitePic}
	The Picard group of a finite commutative ring is trivial.
\end{proposition}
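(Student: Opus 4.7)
The plan is to use the structure theory of finite (hence Artinian) commutative rings to reduce the statement to the case of a local ring, where invertibility will immediately force freeness. First I would observe that a finite commutative ring $A$ has only finitely many ideals and is of Krull dimension zero, so it is Artinian; by the standard structure theorem for Artinian commutative rings it therefore decomposes as a finite product
\[
A\cong A_1\times\dots\times A_r
\]
of local Artinian rings~$A_i$. Next I would invoke the fact that the Picard group converts such a product decomposition into a direct product $\Pic(A)\cong\prod_i\Pic(A_i)$: a module over the product splits canonically as a tuple of modules, one over each factor, and tensor products distribute accordingly.

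It then remains to show that $\Pic(R)=0$ for any local commutative ring~$R$ (finiteness is not even needed at this stage). For this I would use the standard fact that an invertible module~$M$ over any commutative ring is finitely generated projective of constant rank one; this drops out of writing $1\in R$ as a finite sum $\sum_j m_j\otimes n_j$ under the isomorphism $M\otimes_R N\cong R$, which yields a dual basis exhibiting~$M$ as a direct summand of a finitely generated free module. Since every finitely generated projective module over a local ring is free, $M\cong R$ and therefore $[M]$ is trivial in $\Pic(R)$.

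The main (minor) obstacle is essentially nil: every ingredient is classical. The two points that merit care are the compatibility of $\Pic$ with finite products of commutative rings, and the observation that an abstractly invertible module is automatically finitely generated projective of rank one; both are routine.
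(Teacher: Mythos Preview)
Your argument is correct and matches the paper's proof almost verbatim: decompose the finite ring as a product of local rings, use multiplicativity of $\Pic$ on products, and finish by noting that invertible modules over a local ring are free. The paper phrases the last step as ``the Picard group of any Noetherian local ring is trivial as a consequence of the Nakayama Lemma,'' which is exactly the content of your projective-hence-free step.
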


\begin{proof}
	A finite ring is a product of finitely many local rings. As there is a canonical isomorphism~\hbox{$\Pic(A\times B)\cong\Pic(A)\oplus\Pic(B)$}, it suffices to consider finite local rings. But the Picard group of any Noetherian local ring is trivial as a consequence of the Nakayama Lemma.
\end{proof}

The Picard groups of Burnside rings have been determined by tom Dieck in collaboration with Petrie. Using the notation of~\eqref{eq:n}, so that~$n=|G|$, the order of~$G$, the fundamental result reads as follows.

\begin{proposition}
	The Picard group of the Burnside ring~$\A(G)$ is isomorphic to the cokernel of the map
	\begin{displaymath}
		\C(G)^\times\oplus(\A(G)/n\C(G))^\times\longrightarrow(\C(G)/n\C(G))^\times
	\end{displaymath}
	induced by the canonical homomorphisms.
\end{proposition}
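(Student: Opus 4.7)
My plan is to recognize the data $n\C(G)\subset\A(G)\subset\C(G)$ as a conductor square in the sense of Milnor. Since $n\C(G)$ is an ideal of $\C(G)$ that lies inside $\A(G)$ by~\eqref{eq:n}, it is automatically an ideal of $\A(G)$ as well, and the quotient $\A(G)/n\C(G)$ embeds into $\C(G)/n\C(G)$. The square
\begin{displaymath}
\begin{array}{ccc} \A(G) & \longrightarrow & \C(G) \\ \downarrow & & \downarrow \\ \A(G)/n\C(G) & \longrightarrow & \C(G)/n\C(G) \end{array}
\end{displaymath}
is then cartesian in the category of commutative rings, and this is the geometric input of the proof.

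I would next invoke the Mayer-Vietoris exact sequence in low algebraic K-theory attached to such a square, as found in Bass~\cite{Bass}; in the relevant range it reads
\begin{displaymath}
\A(G)^\times \longrightarrow \C(G)^\times \oplus (\A(G)/n\C(G))^\times \longrightarrow (\C(G)/n\C(G))^\times \longrightarrow \Pic(\A(G)) \longrightarrow \Pic(\C(G)) \oplus \Pic(\A(G)/n\C(G)).
\end{displaymath}
The middle map here is exactly the map appearing in the statement, up to the usual sign convention that does not affect its cokernel. To identify that cokernel with $\Pic(\A(G))$, it therefore suffices to show that the two Picard groups on the far right vanish.

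For the ghost ring, $\C(G)$ is a finite product of copies of $\ZZ$, indexed by the conjugacy classes of subgroups of $G$; the Picard group is additive over finite products of rings and $\Pic(\ZZ)=0$, so $\Pic(\C(G))=0$. The quotient $\A(G)/n\C(G)$, being a subring of the finite ring $\C(G)/n\C(G)$, is itself finite, whence $\Pic(\A(G)/n\C(G))=0$ by Proposition~\ref{prop:finitePic}. The substance of the argument lies in identifying the conductor square and recalling the Mayer-Vietoris sequence; the two Picard-group vanishings are routine, so no genuine obstacle is expected.
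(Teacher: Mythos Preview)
Your argument is correct. The paper does not actually prove this proposition: it attributes the result to tom~Dieck and Petrie and refers the reader to \cite{tomDieck+Petrie}, \cite{tomDieck:Manu}, \cite{tomDieck:LNM}, \cite{tomDieck:Aarhus}, and \cite{tomDieck:Crelle} for details, noting only that the kernel of the displayed map is $\A(G)^\times$ (which is exactly the left end of your exact sequence). Your conductor-square and units--Pic Mayer--Vietoris argument is precisely the method underlying those references, and it is also the same cartesian square the paper itself exploits in the proof of Theorem~\ref{thm:main} one level up, for the Brauer group; so your write-up is entirely in the spirit of the paper.
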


The kernel of this map is~$\A(G)^\times$. See~\cite{tomDieck+Petrie},~\cite{tomDieck:Manu},~\cite{tomDieck:LNM},~\cite{tomDieck:Aarhus}, and~\cite{tomDieck:Crelle} for the full story. 


\section{Brauer groups}\label{sec:Brauer}

The Brauer group has its origin in the study of central division algebras over number fields by Brauer, Hasse, and Noether. The definition has been generalized by Azumaya to local rings, and by Auslander and Goldman to general commutative rings. See~\cite{Auslander+Goldman} for the original source and~\cite{Orzech+Small} for a useful set of lecture notes. 

Briefly, elements of the Brauer group of a commutative ring~$A$ are represented by isomorphism classes of central separable~$A$-algebras~$D$. The tensor product induces the structure of an abelian monoid on these, with~$A$ as the neutral element. The Brauer group is obtained by modding out the central separable~$A$-algebras of the form~$\mathrm{End}_A(M)$, where~$M$ is a finitely generated projective faithful~$A$-module. Since~$D\otimes_AD^\circ\cong\mathrm{End}_A(D)$ for central separable~$A$-algebras~$D$, where~$D^\circ$ is~$D$ with the opposite multiplication, this is indeed a group. 

Wedderburn's theorem, see~\cite{Wedderburn}, implies that the Brauer group of a finite field is trivial. In fact, the following more general result is Corollary~5.9 in~\cite{Orzech+Small}.

\begin{proposition}\label{prop:finiteBr}
	The Brauer group of a finite commutative ring is trivial.
\end{proposition}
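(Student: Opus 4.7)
The plan is to mirror the structure of the proof of Proposition~\ref{prop:finitePic}: first decompose the finite commutative ring as a product of local rings, then handle each finite local ring by reducing to its residue field, which will be a finite field where Wedderburn's theorem applies.

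First I would record that any finite (hence Artinian) commutative ring $A$ decomposes as a finite product $A \cong A_1 \times \cdots \times A_r$ of finite local rings, and that the Brauer group is additive with respect to finite products: $\Br(A \times B) \cong \Br(A) \oplus \Br(B)$. This is the analogue of the splitting used in Proposition~\ref{prop:finitePic}, and it reduces the problem to the case of a finite commutative local ring $(A, \mathfrak{m})$.

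Next I would use the lifting behavior of Azumaya algebras along nilpotent extensions. Since $A$ is finite and local, the maximal ideal $\mathfrak{m}$ is nilpotent, so the residue map $A \twoheadrightarrow k = A/\mathfrak{m}$ is surjective with nilpotent kernel. The key input is that for such a surjection the induced homomorphism $\Br(A) \to \Br(k)$ is an isomorphism: every Azumaya $k$-algebra lifts, uniquely up to isomorphism, to an Azumaya $A$-algebra, and any two Azumaya $A$-algebras which become isomorphic over $k$ differ by the endomorphism algebra of a finitely generated projective faithful $A$-module. This is one of the standard consequences of the deformation theory of separable algebras developed in Auslander--Goldman; in the notes \cite{Orzech+Small} it appears as part of the lifting results leading up to the cited Corollary~5.9.

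Finally, the residue field $k$ is a finite field (as a quotient of the finite ring $A$), so Wedderburn's theorem gives $\Br(k) = 0$, and combining with the isomorphism $\Br(A) \cong \Br(k)$ yields $\Br(A) = 0$. Assembling the local pieces via the product decomposition completes the proof. The only step that is not a direct citation is the lifting isomorphism $\Br(A) \cong \Br(k)$ for nilpotent residue maps; this is the technical heart, but for a short note it can simply be quoted from \cite{Orzech+Small}.
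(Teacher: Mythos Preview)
Your argument is correct. The reduction to finite local rings via the product decomposition, followed by the isomorphism $\Br(A)\cong\Br(A/\mathfrak m)$ for a local ring with nilpotent maximal ideal (equivalently, a complete local ring, which is Azumaya's theorem, Theorem~6.5 in \cite{Auslander+Goldman}, the same result the paper invokes in the proof of Proposition~\ref{prop:Z}), and then Wedderburn, is the standard route and is exactly how the result is obtained in \cite{Orzech+Small}.

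Note, however, that the paper does not actually \emph{prove} Proposition~\ref{prop:finiteBr}: it is stated without proof and attributed to Corollary~5.9 of \cite{Orzech+Small}. So there is no in-paper argument to compare against; what you have written is a faithful unpacking of that citation, and in particular your ``technical heart'' step is nothing more than the Azumaya lifting theorem already cited elsewhere in the paper.
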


As another example, we have~$\Br(\ZZ)=0$. In contrast to the results about the group of units and the Picard group of~$\ZZ$, this is non-trivial to see, and is usually deduced from class field theory. (See~\cite{Morris} for a direct attempt.) For lack of a reference,~I will include a proof here.

\begin{proposition}\label{prop:Z}
	The Brauer group of the ring of integers is trivial.
\end{proposition}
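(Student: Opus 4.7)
The plan is to apply the Albert--Brauer--Hasse--Noether exact sequence of class field theory,
\[
0 \longrightarrow \Br(\QQ) \longrightarrow \bigoplus_v \Br(\QQ_v) \xrightarrow{\;\sum_v \mathrm{inv}_v\;} \QQ/\ZZ \longrightarrow 0,
\]
where $v$ ranges over all places of $\QQ$, finite and infinite. Given an Azumaya $\ZZ$-algebra $A$, I would track the class of $A \otimes_\ZZ \QQ$ through this sequence and show that each local invariant vanishes.

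First, I would note that $\Br(\ZZ) \to \Br(\QQ)$ is injective: if $A \otimes_\ZZ \QQ \cong M_n(\QQ)$, then $A$ is a maximal $\ZZ$-order in $M_n(\QQ)$ and hence isomorphic to $\mathrm{End}_\ZZ(P)$ for some rank $n$ projective $\ZZ$-module $P$, which is free since $\ZZ$ is a principal ideal domain, so $A \cong M_n(\ZZ)$ is already trivial in $\Br(\ZZ)$. Second, I would compute the local invariants at finite primes. The base change $A \otimes_\ZZ \ZZ_p$ is Azumaya over the Henselian local ring $\ZZ_p$; the standard isomorphism $\Br(\ZZ_p) \cong \Br(\FF_p)$ combined with Proposition~\ref{prop:finiteBr} gives $\Br(\ZZ_p) = 0$. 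Hence the image of $[A]$ in $\Br(\ZZ_p)$, and therefore in $\Br(\QQ_p)$, is zero, so $\mathrm{inv}_p([A \otimes \QQ]) = 0$ at every finite prime $p$.

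The archimedean place comes for free: $\mathrm{inv}_\infty$ takes values in $\tfrac12\ZZ/\ZZ \subset \QQ/\ZZ$ via $\Br(\RR) \cong \ZZ/2$, and the exactness of the sum formula forces $\mathrm{inv}_\infty([A\otimes\QQ]) = 0$ as soon as every finite invariant vanishes. Consequently $[A \otimes_\ZZ \QQ] = 0$ in $\Br(\QQ)$, and injectivity in the first step yields $[A] = 0$ in $\Br(\ZZ)$. The main obstacle is that the whole argument rests on the highly non-trivial exact sequence of class field theory; the remaining ingredients (injectivity via maximal orders, and the Henselian isomorphism $\Br(\ZZ_p) \cong \Br(\FF_p)$) are comparatively routine. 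Avoiding class field theory would essentially amount to reproving reciprocity.
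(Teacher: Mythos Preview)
Your argument is correct and follows essentially the same route as the paper's proof: inject $\Br(\ZZ)$ into $\Br(\QQ)$, use the Brauer--Hasse--Noether local-global principle, kill the finite invariants via $\Br(\ZZ_p)\cong\Br(\FF_p)=0$, and let the sum formula force the real invariant to vanish. The only cosmetic difference is the justification of the injection $\Br(\ZZ)\hookrightarrow\Br(\QQ)$: the paper simply invokes the general fact for regular domains (Auslander--Goldman, Theorem~7.2), whereas you give a hands-on argument through maximal orders in $M_n(\QQ)$; both are valid and lead to the same conclusion.
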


\begin{proof}
	As~$\ZZ$ is a regular domain, there is an injection of~$\Br(\ZZ)$ into~$\Br(\QQ)$, see Theorem 7.2 in~\cite{Auslander+Goldman}. Hasse, Brauer, and Noether's work \cite{Brauer+Hasse+Noether} yielded a local-global principle:~$\Br(\QQ)$ injects into
	\begin{displaymath}
		\Br(\RR)\oplus\bigoplus_p\Br(\QQ_p)\cong\ZZ/2\oplus\bigoplus_p\QQ/\ZZ,
	\end{displaymath}
	and the image consists of those families such that their sum (in~$\QQ/\ZZ$) vanishes. However, the image in the summands~$\Br(\QQ_p)$ is trivial since the map from~$\Br(\ZZ)$ to~$\Br(\QQ_p)$ factors through~$\Br(\ZZ_p)$, which is complete local, so~$\Br(\ZZ_p)\cong\Br(\FF_p)$ by Azumaya's Theorem, see Theorem 6.5 in~\cite{Auslander+Goldman}, and~$\Br(\FF_p)=0$ by Wedderburn's theorem above. The image in~$\Br(\RR)$ must also be zero as otherwise the sum would not be zero.
\end{proof}

Note that the Picard groups show that the non-triviality of an invariant of commutative rings for~$\ZZ$ does not imply its non-triviality for Burnside rings. Therefore, the main Theorem~\ref{thm:main} below is non-obvious and needs proof.


\section{The main theorem}\label{sec:thm}

\begin{theorem}\label{thm:main}
	The Brauer group of a Burnside ring vanishes.
\end{theorem}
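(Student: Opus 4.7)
The plan is to put $\A(G)$ into a conductor (Milnor) square with the ghost ring $\C(G)$ and then feed this square into the standard Mayer--Vietoris exact sequence linking Picard and Brauer groups, reducing the theorem to the triviality of the invariants of the three surrounding rings, each of which has already been treated in the previous sections.

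With $n=|G|$ as in~\eqref{eq:n}, the subset $n\C(G)$ is an ideal of both $\A(G)$ and $\C(G)$, and I would first verify that the square
\begin{displaymath}
\xymatrix{
\A(G) \ar[r] \ar[d] & \C(G) \ar[d] \\
\A(G)/n\C(G) \ar[r] & \C(G)/n\C(G)
}
\end{displaymath}
is cartesian: if $c\in\C(G)$ has reduction lying in the subring $\A(G)/n\C(G)$, pick any lift $a\in\A(G)$; then $c-a\in n\C(G)\subset\A(G)$, so $c\in\A(G)$. For such a Milnor square (with the vertical maps surjective onto the quotient by a common ideal) there is a Mayer--Vietoris sequence whose Brauer segment reads
\begin{displaymath}
\Pic(\C(G)/n\C(G))\longrightarrow\Br(\A(G))\longrightarrow\Br(\C(G))\oplus\Br(\A(G)/n\C(G)),
\end{displaymath}
the Brauer-level continuation of the exact sequence already used, at the Picard level, in tom~Dieck and Petrie's description of $\Pic(\A(G))$ recalled above. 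Consequently $\Br(\A(G))$ will vanish as soon as its two neighbours do.

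The remaining step is to dispatch these three flanking groups using the propositions already collected. The ghost ring $\C(G)$ is a finite product of copies of $\ZZ$, and both $\Br$ and $\Pic$ are additive over finite products, so Proposition~\ref{prop:Z} gives $\Br(\C(G))=0$. The quotients $\A(G)/n\C(G)$ and $\C(G)/n\C(G)$ are finite rings, since $\C(G)$ is a free $\ZZ$-module of finite rank, hence Proposition~\ref{prop:finiteBr} kills $\Br(\A(G)/n\C(G))$ and Proposition~\ref{prop:finitePic} kills $\Pic(\C(G)/n\C(G))$. The theorem then drops out of exactness.

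The main obstacle is not the verification that these groups vanish, which is essentially mechanical given the preceding sections, but rather citing (or establishing) the correct Brauer-level Mayer--Vietoris sequence for a conductor square. The Picard portion is the classical Bass--Milnor sequence, while its extension to $\Br$ is subtler and has to be invoked under hypotheses that one must check are met here---namely, a Milnor square in which one edge is an inclusion of rings and the other a surjection onto a quotient by a common ideal. With that reference in hand, the proof is complete.
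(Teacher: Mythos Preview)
Your proposal is correct and follows essentially the same route as the paper: the same conductor square with the ghost ring, the same Mayer--Vietoris segment $\Pic(\C(G)/n\C(G))\to\Br(\A(G))\to\Br(\C(G))\oplus\Br(\A(G)/n\C(G))$, and the same three propositions to annihilate the flanking terms. The paper adds only the explicit references (Childs; Knus--Ojanguren) for the Brauer-level Mayer--Vietoris sequence and stresses that it is precisely the vanishing of $\Pic(\C(G)/n\C(G))$ that guarantees the boundary map is defined and the sequence is exact at $\Br(\A(G))$---exactly the subtlety you flagged as the main obstacle.
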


\begin{proof}
	Using again the notation from~\eqref{eq:n}, the Burnside ring~$\A(G)$ can be written as a pullback
	\begin{center}
  \mbox{ 
    \xymatrix{
    	\A(G)\ar[r]\ar[d]&
	\C(G)\ar[d]\\
	\A(G)/n\C(G)\ar[r]&
	\C(G)/n\C(G)
    } 
  }
\end{center}
	of rings. Clearly, the map~$\C(G)\rightarrow\C(G)/n\C(G)$ is surjective. 
	
	If, more generally, there is a pullback 
	\begin{center}
  	\mbox{ 
    \xymatrix{
    	A\ar[r]\ar[d]&
	C\ar[d]\\
	A'\ar[r]&
	C'
    } 
  	}
	\end{center}
	of rings with~$C\rightarrow C'$ surjective, one might expect an exact sequence
	\begin{displaymath}
		\Pic(C')\overset{\partial}{\longrightarrow}\Br(A)\longrightarrow
		\Br(A')\oplus\Br(C)\longrightarrow\Br(C'),
	\end{displaymath}
	the Mayer-Vietoris sequence for the Brauer group, where the unlabeled arrows are the canonical maps.  Unfortunately, in general, this exists only under additional hypotheses, see Theorem~4.1 in~\cite{Childs} and the stronger Theorem~2.2 in~\cite{Knus+Ojanguren}. In fact, already the definition of~$\partial$ is established only under further assumptions. Fortunately, in our situation, these are all satisfied:
	
		By Proposition~\ref{prop:finitePic}, we have~$\Pic(\C(G)/n\C(G))=0$. This implies that~$\partial$ is defined and that the sequence is exact at~$\Br(\A(G))$, so that the group of interest injects into~$\Br(\A(G)/n\C(G))\oplus\Br(\C(G))$.
	
	The ring~$\A(G)/n\C(G)$ is finite. Hence its Brauer group vanishes by Proposition~\ref{prop:finiteBr}.
	
	The ghost ring~$\C(G)$ is a product of copies of~$\ZZ$, one for each conjugacy class of subgroups of~$G$. If a ring splits as a product of two subrings, the Brauer group of it splits as well into the Brauer groups of the subrings. See~\cite{Orzech+Small}, p.~58, for example. Therefore, an induction on the number of conjugacy classes of subgroups, together with Proposition~\ref{prop:Z}, shows that~$\Br(\C(G))=0$ as well.
	
	To sum up, the Brauer group~$\Br(\A(G))$ is a subgroup of the trivial group.
\end{proof}

\begin{corollary}
	Every central separable algebra over a Burnside ring is isomorphic to the endomorphism algebra of a 	finitely generated projective faithful module.
\end{corollary}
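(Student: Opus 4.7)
The plan is to exploit the presentation of~$\A(G)$ as a pullback sitting inside the ghost ring. From~\eqref{eq:n} we have~$n\C(G)\subset\A(G)\subset\C(G)$, which identifies~$\A(G)$ with the fibre product of~$\C(G)$ and~$\A(G)/n\C(G)$ over~$\C(G)/n\C(G)$, and the map~$\C(G)\to\C(G)/n\C(G)$ is surjective. This is the Milnor-square shape one needs to apply a Mayer--Vietoris sequence for Brauer groups.

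First I would invoke a Mayer--Vietoris sequence of the form
\begin{displaymath}
\Pic(\C(G)/n\C(G))\longrightarrow\Br(\A(G))\longrightarrow\Br(\A(G)/n\C(G))\oplus\Br(\C(G))\longrightarrow\Br(\C(G)/n\C(G)),
\end{displaymath}
whose existence for general pullbacks of commutative rings is delicate: already the connecting map~$\partial$ is only defined under extra hypotheses, and exactness at~$\Br(\A(G))$ requires more. The crucial point in our situation is that the bottom-right corner~$\C(G)/n\C(G)$ is a finite commutative ring, so Proposition~\ref{prop:finitePic} tells us its Picard group is trivial. This supplies the hypothesis needed in the standard Mayer--Vietoris references and reduces us to showing that the two middle Brauer groups vanish.

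For~$\Br(\A(G)/n\C(G))$, the ring~$\A(G)/n\C(G)$ is itself finite, so Proposition~\ref{prop:finiteBr} gives the vanishing immediately. For~$\Br(\C(G))$, I would use that~$\C(G)$ is, by construction, a product of copies of~$\ZZ$ indexed by the conjugacy classes of subgroups of~$G$, that Brauer groups distribute over finite products of rings, and that~$\Br(\ZZ)=0$ by Proposition~\ref{prop:Z}; a short induction on the number of factors then finishes the job. Assembling these pieces, the sequence shows that~$\Br(\A(G))$ injects into the zero group, hence vanishes.

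The main obstacle is really the first step: locating a version of the Mayer--Vietoris sequence whose hypotheses are satisfied in our setting. Once one accepts that~$\Pic$ of the bottom-right corner is zero---which is automatic here by finiteness---the remaining terms can be killed using results already collected in the paper, so the argument is essentially a matter of assembling the right inputs rather than proving anything new about Brauer groups from scratch.
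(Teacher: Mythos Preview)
Your argument is correct and reproduces exactly the paper's proof of Theorem~\ref{thm:main}; in the paper the Corollary carries no separate proof and is recorded as an immediate consequence of that theorem together with the definition of the Brauer group. The only thing missing from your write-up is the last, essentially definitional, sentence: once $\Br(\A(G))=0$, every central separable $\A(G)$-algebra represents the trivial Brauer class and is therefore isomorphic to $\mathrm{End}_{\A(G)}(M)$ for some finitely generated projective faithful module~$M$.
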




\vfill

\parbox{\linewidth}{%
Markus Szymik\\
Department of Mathematical Sciences\\
NTNU Norwegian University of Science and Technology\\
7491 Trondheim\\
NORWAY\\
\href{mailto:markus.szymik@ntnu.no}{markus.szymik@ntnu.no}\\
\href{https://folk.ntnu.no/markussz}{folk.ntnu.no/markussz}}

\end{document}